\algnewcommand{\IIf}[1]{\State\algorithmicif\ #1\ \algorithmicthen}
\algnewcommand{\EndIIf}{\unskip\ \algorithmicend\ \algorithmicif}
\title{Exhaustive search of convex pentagons which tile the plane}
\author {Michaël Rao}
\newtheorem{theorem}{Theorem}
\newtheorem{lemma}[theorem]{Lemma}
\newtheorem{proposition}[theorem]{Proposition}
\theoremstyle{definition}
\newtheorem*{example}{Example}
\begin{document}
\maketitle

\begin{abstract}
  We present an exhaustive search of all families of convex pentagons which tile the plane.
  This research shows that there are no more than the already 15 known families. 
  In particular, this implies that there is no convex polygon which allows only non-periodic tilings.
\end{abstract}

\def\note#1{}%

\def\plane{\mathbb{R}^2}
\def\surf{\mathcal{S}}
\def\seq{\mathfrak{s}}
\def\penta{\mathcal{P}}
\def\ver{s} %
\def\tiling{\mathcal{T}}
\def\nodes#1{\mathcal{T}(#1)} %
\def\fnodes#1{\mathcal{FT}(#1)} %
\def\vertex#1{\mathcal{V}(#1)} %
\def\ivertex#1{\mathcal{IV}(#1)} %
\def\ivertexh#1{\mathcal{IV}_H(#1)} %
\def\ivertexf#1{\mathcal{IV}_F(#1)} %
\def\vect{V} %
\def\cvect{{V^c}} %

\def\mfA{\mathfrak{A}}
\def\mX{\mathcal{X}}
\def\mY{\mathcal{Y}}
\def\mB{\mathcal{B}}

\def\v1{(1,1,1,1,1)}

\def\setvectype{\mathcal{W}} %
\def\setvecctype{{{\mathcal{W}}^c}} %

\section{Introduction}\label{sec:positive}

If one asks which convex polygon can tile the plane (allowing translations, rotations and mirrors), the case of pentagons is the only opened case: every triangle and quadrilateral tiles the plane, there are 3 families of hexagons which tile the plane, and no convex polygon with more than six sides can tile the plane (see for example \cite{Schattschneider}).

The research of families of pentagons which tile the plane has an intriguing history.
The first families of pentagons were presented by Reinhardt in 1918  \cite{Reinhardt}.
Kershner presented new families, and announced that the list was complete in 1968 \cite{Kershner}. But new families were found afterwards, one by R. James in 1975, three by an amateur mathematician M. Rice in 1977, and one by R. Stein in 1985. Finally, the fifteenth (and last) family was found by Mann, McLoud and Von Derau in 2015 (see \cite{last}).

We present here an exhaustive search of all families of pentagons which tile the plane. This search is not restricted to periodic tilings, and does not find any new family.
The key point is that there are only finitely many, 371, families of angle conditions to consider.

In Section \ref{sec:positive}, we introduce the notations, and we show that if a pentagon tiles the plane, then there is a tiling such that every vertex type has positive density.
In Section \ref{sec:371}, we show that there are only finitely possible sets of vertex types in a positive density tiling by a pentagon.
Then, in Section \ref{sec:phase3}, we present a backtracking technique to search a tiling, when we fix the set of vertex types.
This backtracking algorithm does not find any new family of pentagons. 

\section{Positive density tilings}\label{sec:positive}

Throughout this section, we fix a convex pentagon $\penta\subset\plane$.
Let $s_1,\ldots s_5 \in \plane$ be its 5 vertices in clockwise order.
For $i\in \{1,..5\}$, let $\alpha_i\times\pi$ be the angle at vertex $s_i$
and let $\alpha=(\alpha_1,\ldots, \alpha_5)$.
We recall that $\sum_{i=1}^5 \alpha_i = 3$.

A \emph{tiling} $\tiling$ of $\surf$ by $\penta$ is a set of subset of $\surf$ such that:
\begin{itemize}
\item $\cup_{P\in \tiling} P = \surf$,
\item for every $P\in \tiling$, there is an isometry of the plane $h_P$ such that $h_P(\penta)=P$,
\item for every $P,Q\in \tiling$ with $P\ne Q$, $\mathring{P} \cap \mathring{Q} = \emptyset$ (where $\mathring{P}$ is the interior of $P$ with the usual topology on $\plane$).
\end{itemize}
Given a tiling of $\surf$, we fix an isometry $h_P$ for every $P\in \tiling$ such that $P=h_P(\penta)$. %
Elements of $\tiling$ are called a \emph{tiles}.
A point $\ver$ in $\surf$ is a vertex of $\tiling$ if it is a vertex of at least one tile in $\tiling$ (that is, there is a $P\in \tiling$ and a $i\in\{1,\ldots,5\}$ such that $\ver = h_P(s_i)$.
The set of vertices of $\tiling$ is denoted $\vertex{\tiling}$.
A \emph{tiling of the plane} is a tiling of $\plane$.

From now on, we fix a tiling $\tiling$ of the plane by $\penta$.
Let $P,Q\in \tiling$ and $\ver \in \vertex{\tiling}$. We say that \emph{$Q$ follows $P$ around $\ver$} if there are $i$ and $j$ such that $h_P(s_i)=h_Q(s_j)=\ver$, $P\cap Q \ne \{\ver\}$,
and $Q$ are just after $P$ if we turn around $\ver$ in clockwise order.

We distinguish two types of vertices: full and half.
Let $\ver$ be a vertex of a tiling $\tiling$. %
If there is a circular sequence $\seq_\ver=(P_1,\ldots, P_k)$ of tiles such that for every $i\in\{1,\ldots k\}$, $P_{i+1}$ follows $P_i$ around $\ver$ (where the indices are taken modulo $k$) we say that $\ver$ is \emph{full}. 

Otherwise, we say that $\ver$ is \emph{half}. In this case, there is a tile $Q$ for which $\ver$ is on the border, but $\ver$ is not a vertex of $Q$. That is, $\ver \in Q \setminus (\mathring{Q} \cup \{s_1,\ldots, s_5\})$, and $\ver$ is on the line segment from $h_Q(s_i)$ to $h_Q(s_{i+1})$ for a $i\in\{1,\ldots 5\}$ (modulo 5). Then there is a maximal sequence $\seq_\ver=(P_1,\ldots, P_k)$ of tiles such that for every $i\in \{1,\ldots k-1\}$, $P_{i+1}$ follows $P_i$ around $\ver$. (There is no tile $Q$ such that $Q$ follows $P_k$ around $\ver$, or $P_1$ follows $Q$ around $\ver$.) Note that for any vertex $\ver$, if $\ver$ is full, then the circular sequence $\seq_\ver$ is unique, and if $\ver$ is half, the maximal sequence $\seq_\ver$ is also unique.

The \emph{vector type} of $\ver$, denoted $\vect(\ver)$, is $(c_1,\ldots,c_5) \in \mathbb{N}^5$, where for every $i\in\{1,\ldots 5\}$, $c_i$ is the number of tiles $P$ in $\seq_\ver$ such that $h_P(s_i)=\ver$.
The \emph{corrected vector type} of $\ver$, denoted $\cvect(\ver)$, is either $\vect(\ver)$ if $\ver$ is full, or $2\times \vect(\ver)$ if $\ver$ is half.
We have in any case $\cvect(\ver) \cdot \alpha =2$.

\def\arcs{\mathcal{A}}
Let $G=(\tiling,\arcs)$ be the oriented graph, called the \emph{underliyng graph} of $\tiling$, such that there is an arc between $P$ and $Q$ if $P$ follows $Q$ around $\ver$ for a vertex $\ver$ in $\tiling$. 
Moreover, we label each tile $P$ in the graph by ``$+$'' if $h_P$ is a translation or a rotation, or by ``-'' if $h_P$ is a glide reflexion.
We label each arc $(P,Q)$ by $(i,j)$ with $h_P(s_i) = h_Q(s_j) =\ver$, where $\ver$ is such that $P$ follows $Q$ around $\ver$.

Let $\tiling'\subset\tiling$. The subgraph induced by $\tiling'$ is the graph $G[\tiling']=(\tiling',\arcs')$, where $\arcs'=\arcs \cap \tiling^2$.
A tile $P$ in $\tiling'$ is a \emph{frontier tile} if there is a $Q\in\tiling \setminus \tiling'$ such that either $(P,Q)\in \arcs$ or $(Q,P)\in \arcs$. %
The set of frontier tiles of a subgraph $H$ is denoted $\fnodes{H}$.

Given an induced subgraph $H$ of $G$, we denote by $\surf(H) = \bigcup_{P\in \tiling(H)} P$, where $\tiling(H)$ is the set of tiles in $H$. (Note that $\tiling(H)$ is then a tiling of $\surf(H)$.)

The set of vertices of $H$ is denoted $\vertex{H}=\vertex{\tiling(H)}$.
A vertex $\ver$ is an \emph{interior vertex} of a subgraph $H$ of $G$ if for every $P\in \tiling$ such that $\ver$ is a vertex of $P$, then $P\in \nodes{H}$.
The set of interior vertices of $H$ is denoted $\ivertex{H}$.
Moreover, the set of interior and half (resp. full) vertices of $H$ is denoted $\ivertexh{H}$ (resp. $\ivertexf{H}$).
Note that an interior half vertex of $H$ can be on the boundary of $\surf(H)$.

Let $\setvectype(\tiling)=\{\vect(\ver) : \ver \in \vertex{\tiling\}}$ and $\setvecctype(\tiling)=\{\cvect(\ver) : \ver \in \vertex{\tiling}\}$. 
If $\tiling$ is clear in the context, we may write $\setvectype$ or $\setvecctype$.
Note that $\setvectype(\tiling)$ and $\setvecctype(\tiling)$ are finite.

Let $o\in \plane$ and $r\in \mathbb{R}^+$.
Let $\tiling_{o,r}$ be the set of tiles $P\in \tiling$ such that $P\cap D(o,r) \ne \emptyset$, where $D(o,r)$ is the closed disk of radius $r$ and center $o$.
Let $G_{o,r}= G[\tiling_{o,r}]$ be the graph induced by $\tiling_{o,r}$.

\begin{proposition}\label{prop:o}
There are constants $c$ and $c'$ in $\mathbb{R}^+$ such that for every $r\in\mathbb{R}^+$, $|\fnodes{G_{o,r}}| \le c \times r + c'$.  
\end{proposition}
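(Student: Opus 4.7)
The plan is to identify, for each frontier tile of $G_{o,r}$, a vertex of $\tiling$ lying in a thin annulus of width $d$ just outside the circle of radius $r$, where $d$ is the diameter of $\penta$, and then bound the number of such vertices by an area argument and the number of tiles meeting each such vertex by a constant depending only on $\penta$. Throughout, let $d$, $a$, and $\alpha_{\min} := \min_i \alpha_i$ denote the diameter, area, and minimum angle-coefficient of $\penta$ respectively; these are all strictly positive constants.

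First I would show that every frontier tile $P$ of $G_{o,r}$ has some vertex $\ver$ with $r < |\ver - o| \le r + d$. By the definition of a frontier tile, there exists $Q \in \tiling \setminus \tiling_{o,r}$ with $(P,Q)\in\arcs$ or $(Q,P)\in\arcs$, and the definition of $\arcs$ forces $P$ and $Q$ to share a common vertex $\ver$. Since $\ver \in Q$ and $Q \cap D(o,r) = \emptyset$, we have $|\ver - o| > r$; and since $\ver \in P$ and $P \cap D(o,r) \ne \emptyset$, picking some $p \in P \cap D(o,r)$ and using $|\ver - p| \le d$ yields $|\ver - o| \le r + d$.

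Next I would bound the set $V_r$ of vertices of $\tiling$ lying in the annulus $A_r := \{x : r < |x - o| \le r+d\}$. Any tile $P'$ with a vertex $\ver' \in A_r$ is contained in the closed disk of radius $d$ around $\ver'$, hence in the thickened annulus with inner radius $\max(0, r-d)$ and outer radius $r+2d$, whose area is at most $6\pi d r + 9\pi d^2$. Since the tiles of $\tiling$ are interior-disjoint and each has area $a$, at most $(6\pi d r + 9\pi d^2)/a$ tiles contribute a vertex to $A_r$, and since each tile has $5$ vertices we obtain $|V_r| \le 5(6\pi d r + 9\pi d^2)/a$. Moreover, any vertex of $\tiling$ is incident to at most $K := 2/\alpha_{\min}$ tiles, since the angles around it sum to at most $2\pi$ and each is at least $\alpha_{\min}\pi$. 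Assigning to each frontier tile one of its vertices in $A_r$ gives a map $\fnodes{G_{o,r}} \to V_r$ with fibers of size at most $K$, so $|\fnodes{G_{o,r}}| \le 5K(6\pi d r + 9\pi d^2)/a$, which is of the desired form $cr + c'$.

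The main obstacle is really just the first step, namely extracting from the abstract definition of a frontier tile a concrete vertex of $\penta$ placed in the narrow annulus $A_r$; once this is done, both the area count and the vertex-valence bound are routine. A minor subtlety arises when $r < d$, in which case the thickened annulus degenerates into a disk and the inner radius is clipped to $0$, but this only affects the additive constant $c'$.
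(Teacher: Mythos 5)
Your proof is correct. Step 1 correctly extracts from the definition of the arcs a common corner $\ver$ of the frontier tile $P$ and of a tile $Q$ with $Q\cap D(o,r)=\emptyset$, which forces $r<|\ver-o|\le r+d$; the annulus-area bound on the number of such vertices and the bound $2/\min_i\alpha_i$ on the number of tiles having a given point as a corner are both sound, and together they yield a bound of the form $c\,r+c'$ with explicit constants (independent even of $o$). The paper's own proof is only a one-line sketch: it invokes the uniform local finiteness of the tiling (for each radius $\rho$ a bound $n_\rho$ on $|\tiling_{o',\rho}|$ valid for every center $o'$), the implicit argument being that every frontier tile meets the circle of radius $r$, which can be covered by $O(r)$ disks of fixed radius. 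So both arguments are packing arguments exploiting that $\penta$ has fixed positive area and bounded diameter; yours is self-contained and quantitative where the paper's is a sketch. A minor simplification: the detour through vertices and vertex valence is not needed, since a frontier tile contains a point of $D(o,r)$ and a corner at distance greater than $r$ from $o$, hence is entirely contained in the annulus $\max(0,r-d)<|x-o|\le r+d$; applying your area count to the tiles themselves then bounds $|\fnodes{G_{o,r}}|$ directly, without the factor $5K$.
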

\begin{proof}
This follows from the fact that for every $r\in \mathbb{R}^+$, there is a $n_r\in \mathbb{N}$ such that for every $o\in \plane$, $\vert \tiling_{o,r} \vert \le n_r$.
\end{proof}

For $v\in \setvectype(\tiling)$, $o\in \plane$ and $r\in\mathbb{R}^+$, let: $$f_{o,r}(v)=\frac{|\{ \ver\in \ivertex{G_{o,r}} : \vect(\ver) = v \}|}{|\tiling_{o,r}|}.$$

We say that the tiling $\tiling$ has \emph{positive density} if for every $v\in \setvectype(\tiling)$ and $o\in\plane$, we have $\liminf_{r\to\infty} f_{o,r}(v) > 0$.
(Note that if it is true for one $o\in\plane$, then it is true for every $o\in\plane$.)

\begin{lemma}\label{lemma:positivedensity}
If $\liminf_{r\to\infty} f_{o,r}(v) = 0$, then there is a tiling $\tiling'$ of the plane by $\penta$ such that $\setvectype(\tiling')\subseteq\setvectype(\tiling)\setminus\{v\}$. %
\end{lemma}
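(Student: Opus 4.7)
The plan is to combine two ingredients: a packing/counting argument that produces arbitrarily large disks of $\tiling$ containing no vertex of type $v$, and a standard compactness argument on translates of $\tiling$ that extracts a limit tiling inheriting the relevant local properties.

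\textbf{Step 1 (producing a large $v$-free region).} Starting from $\liminf_{r\to\infty} f_{o,r}(v)=0$, pick a sequence $r_n\to\infty$ with $f_{o,r_n}(v)\to 0$. For a parameter $R>0$, I aim to find $o_n\in\plane$ such that $D(o_n,R)\subseteq D(o,r_n)$ contains no vertex of $\tiling$ of type $v$. Suppose not: then every such disk contains a $v$-vertex. Pack $D(o,r_n-R-d_\penta)$, where $d_\penta$ is the diameter of $\penta$, with $\Theta(r_n^2/R^2)$ pairwise disjoint disks of radius $R$; each contains a $v$-vertex which is automatically an interior vertex of $G_{o,r_n}$, since the tiles touching it stay within an extra $d_\penta$ margin that is still inside $D(o,r_n)$. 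Combined with the bound $|\tiling_{o,r_n}|=\Theta(r_n^2)$ (which follows from the area-based argument of Proposition~\ref{prop:o}), this yields $f_{o,r_n}(v)=\Omega(1/R^2)$, a contradiction for $n$ large. Hence, letting $R_n\to\infty$ slowly enough, I obtain centers $o_n$ such that $D(o_n,R_n)$ contains no vertex of type $v$.

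\textbf{Step 2 (compactness).} Translate $\tiling$ so that $o_n$ goes to the origin, and call the resulting tiling $\tiling_n$; by construction $D(0,R_n)$ contains no vertex of $\tiling_n$ of type $v$. The space of tilings of $\plane$ by isometric copies of $\penta$, equipped with the local (Chabauty--Fell) topology, is compact: each bounded region is met by a bounded number of tiles, and each such tile corresponds to an isometry drawn from a compact set. A diagonal extraction produces a subsequence $\tiling_{n_k}$ converging to a tiling $\tiling'$ of $\plane$ by $\penta$, in the sense that on every fixed bounded region $\tiling_{n_k}$ eventually coincides with $\tiling'$.

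\textbf{Step 3 (transfer of vertex types).} Given $\ver\in\vertex{\tiling'}$, choose $S$ with $\ver\in D(0,S)$. For $k$ large enough that $\tiling_{n_k}$ and $\tiling'$ agree on $D(0,S+d_\penta)$, the sequence $\seq_\ver$ around $\ver$ is identical in the two tilings, so $\vect(\ver)\in\setvectype(\tiling_{n_k})=\setvectype(\tiling)$. Taking $k$ also large enough that $R_{n_k}>S$, the construction of $\tiling_{n_k}$ forbids $\vect(\ver)=v$. Consequently $\setvectype(\tiling')\subseteq\setvectype(\tiling)\setminus\{v\}$, as required.

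The main obstacle is Step~1, where one must balance the growth of $R_n$ against the decay of $f_{o,r_n}(v)$ and verify that the forced $v$-vertices genuinely count as \emph{interior} vertices of $G_{o,r_n}$ (this is what forces the $d_\penta$ buffer when packing). Steps~2 and~3 are routine once the compactness of the tiling space is invoked.
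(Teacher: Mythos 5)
Your Step~1 is sound, and it is essentially the disk version of the paper's own counting argument (the paper uses a grid of $d\times d$ squares instead of packed disks to extract arbitrarily large regions free of $v$-vertices). The genuine gap is in Steps~2--3, in the sentence asserting that the subsequence converges ``in the sense that on every fixed bounded region $\tiling_{n_k}$ eventually coincides with $\tiling'$.'' Chabauty--Fell compactness does not give eventual exact coincidence; it only gives that the patch of $\tiling_{n_k}$ in a window approximates the patch of $\tiling'$ arbitrarily well (tiles converge as isometric placements). Exact stabilization would require something like finite local complexity, which tilings by a single convex pentagon need not have: along a fault line the two adjacent rows of tiles can be sheared by an arbitrary real offset. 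This breaks Step~3. Concretely, if in the windows $D(o_n,R_n)$ there is a fault line whose offset tends to $0$ but never vanishes, then two half vertices of $\tiling_{n_k}$ on opposite sides of the line, at distance tending to $0$, merge in the limit tiling $\tiling'$ into a single full vertex whose vector type is the \emph{sum} of the two half types. That sum need not belong to $\setvectype(\tiling)$, so the inclusion $\setvectype(\tiling')\subseteq\setvectype(\tiling)$ can fail; worse, the merged type could in principle equal $v$, so even the ``no $v$-vertex'' part of the conclusion is not guaranteed by the naive geometric limit. Nothing in your argument excludes such degenerations, and since the lemma is asserted for an arbitrary tiling $\tiling$, they must be dealt with.

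The paper sidesteps exactly this difficulty by taking the limit combinatorially rather than geometrically: it passes to the labelled underlying graph of the large $v$-free patches, where each bounded combinatorial ball has only finitely many possible isomorphism types (bounded degree, finitely many labels), so a compactness/K\"onig argument gives a limit graph whose finite pieces are \emph{exactly} realized, with all vertices interior and no new vector types. The price is that the geometric realization of this limit need not cover the plane -- it may be a tiling of a half-plane or of a strip (the fault-line shear is invisible combinatorially) -- and the paper then needs the final gluing step producing a genuine plane tiling without introducing new vertex types. Your geometric limit gets a plane tiling for free but loses control of the vertex types; to repair the proof you would either have to rule out or handle vertex merging in the limit (e.g.\ by an additional argument on the seams, as in the paper's half-plane/strip regluing), or switch to the combinatorial limit as the paper does.
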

\begin{proof}
  Let $d\in \mathbb{R}^+$. We divide $\plane$ into a grid of $d\times d$ squares $S_{(i,j)}$, with $(i,j)\in\mathbb{Z}^2$. Then we decompose $\tiling$ into a disjoint union of sets of tiles $\tiling_{(i,j)}$ such that a tile $P$ is in $\tiling_{(i,j)}$ if $P\cap S_{(i,j)} \ne \emptyset$ (if there are several possible choices for a tile, we chose arbitrarily). If for every $(i,j)\in \mathbb{Z}^2$, $\tiling_{(i,j)}$ has one vertex $\ver$ with $\vect(\ver)=v$, then $\liminf_{r\to\infty} f_{o,r}(v) > 0$. %

Thus, if $\liminf_{r\to\infty} f_{o,r}(v) = 0$, then for every $d\in\mathbb{R}^+$ there is a subgraph $H_d$ such that $\surf(H_d)$ contains a $d\times d$ square, and $\setvectype(H_d)\subseteq\setvectype(\tiling)\setminus\{v\}$.
We keep a connected component $H'_d$ of $H_d$ such that $\surf(H'_d)$ contains the center of the square.
Then one can construct by compactness an infinite graph $G'$ in which every vertex is an interior vertex, and with $\setvectype(G')\subseteq\setvectype(\tiling)\setminus\{v\}$.
There are three cases: either $G'$ corresponds to a tiling of the plane, of a half-plane or of a stripe. In all cases, one can construct a tiling of the plane without vertex of vector type $v$, and no new vector type.
\end{proof}

\paragraph{Good subsets.} %
We say that a subset $\mX\subseteq \mathbb{N}^5$ is \emph{good} if for every $u\in \mathbb{R}^5$ with $\sum u=0$, either $u\cdot v=0$ for every $v\in \mX$, or there are $v,v'\in \mX$ such that $u\cdot v<0<u\cdot v'$.

\def\poly{\mathfrak{P}}

Suppose that $\tiling$ is a tiling by the pentagon $\penta$. %
By Lemma~\ref{lemma:positivedensity}, we assume that $\tiling$ has positive density.

\begin{proposition}\label{prop:good}
  Let $u\in\mathbb{R}^5$ such that $\sum_{i=1}^5 u_i=0$, and there is a $v^+\in\setvecctype$ with $u\cdot v^+>0$. Then there is a $v^-\in\setvecctype$ such that $u\cdot v^-<0$.
\end{proposition}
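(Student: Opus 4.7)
My plan is to argue by contradiction via an area‑versus‑boundary double count on a large disk. Assume that $u\cdot v\ge 0$ for every $v\in\setvecctype$. Pick any $o\in\plane$ and, for each $r\in\mathbb{R}^+$, consider
$$S(r)\;:=\;\sum_{P\in\tiling_{o,r}}\sum_{i=1}^{5}u_i.$$
Since $\sum_i u_i=0$, trivially $S(r)=0$. I will obtain a second expression for $S(r)$ by reindexing each corner $(P,i)$ by the vertex $\ver=h_P(s_i)$ at which it sits, and grouping the terms by $\ver$.

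For any interior vertex $\ver\in\ivertex{G_{o,r}}$, the whole of $\seq_\ver$ lies in $\tiling_{o,r}$, so the corners grouped at $\ver$ contribute exactly $u\cdot \vect(\ver)$; this equals $u\cdot\cvect(\ver)\ge 0$ if $\ver$ is full, and $\tfrac12\,u\cdot\cvect(\ver)\ge 0$ if $\ver$ is half. Any remaining vertex that appears as a corner of some tile of $\tiling_{o,r}$ but is not interior contributes only a partial sum, bounded in absolute value by a constant (the number of tiles meeting at a single vertex is uniformly bounded, because the angles $\alpha_i\pi$ have a positive minimum). All such vertices lie in frontier tiles, so by Proposition~\ref{prop:o} they account for $O(r)$ in total.

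To reach a contradiction, I invoke positive density. The given $v^+$ arises from some vertex $\ver^+$ of $\tiling$: set $w:=v^+$ if $\ver^+$ is full, and $w:=v^+/2$ if $\ver^+$ is half. In either case $w\in\setvectype$ and $u\cdot w>0$. Since $\tiling$ has positive density, for all sufficiently large $r$ one has $|\{\ver\in\ivertex{G_{o,r}}:\vect(\ver)=w\}|\ge \epsilon\,|\tiling_{o,r}|$ for some $\epsilon>0$; and since tiles have a fixed positive area, $|\tiling_{o,r}|\ge c r^2$. These vertices alone contribute at least $\epsilon c r^2\,(u\cdot w)$ to $S(r)$, and all other interior contributions are $\ge 0$ by the standing assumption. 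Comparing the two evaluations yields $\Omega(r^2)\le O(r)$ for all large $r$, the desired contradiction.

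The only care‑point, rather than a genuine obstacle, is reconciling $\vect$ (which governs the double count, since it counts the actual corners of tiles) with $\cvect$ (in terms of which the hypothesis is phrased): once the factor $2$ for half vertices is tracked consistently, the proof becomes a clean ``area beats boundary'' comparison, for which Proposition~\ref{prop:o} supplies the linear bound on the boundary term and the positive‑density assumption supplies the quadratic lower bound on the bulk term.
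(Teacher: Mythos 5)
Your proof is correct and is essentially the paper's own argument in unnormalized form: both count tile corners grouped by vertex, use Proposition~\ref{prop:o} to control the contribution of non-interior (frontier) vertices by $O(r)$, and use positive density plus $\sum_i u_i=0$ to force the bulk term, which is $\Omega(r^2)$ under the assumption $u\cdot v\ge 0$ for all $v\in\setvecctype$, into a contradiction. The paper merely phrases the same computation after dividing by $|\tiling_{o,r}|$, showing $\sum_{v} v\, f'_{o,r}(v)\to(1,1,1,1,1)$ and then contracting with $u$; your handling of the full/half factor $2$ matches its passage from $\vect$ to $\cvect$.
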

\begin{proof}
  For every $i\in\{1,\ldots 5\}$, if we count the number of angles $i$ in $\tiling(G_{o,r})$, we have:
  
  $$0 \le |\nodes{G_{o,r}}| - \sum_{v\in\setvectype} v_i \times |\{ \ver\in \ivertex{G_{o,r}} : \vect(\ver) = v \}| \le |\fnodes{G_{o,r}}|. $$
  
  Thus, by Proposition~\ref{prop:o}: $$ \lim_{r\to\infty} \sum_{v\in\setvectype} v \times f_{o,r}(v) = \v1.$$ %
  
  Since $\tiling$ has positive density, $\liminf_{r\to\infty} f_{o,r}(v)>0$ for every $v\in\setvectype$.
  Let $$f'_{o,r}(v)=\frac{1}{2}\frac{|\{ \ver\in \ivertexh{G_{o,r}} : \vect(\ver) = \frac{1}{2} v \}|}{|\nodes{G_{o,r}}|} + \frac{|\{ \ver\in \ivertexf{G_{o,r}} : \vect(\ver) = v \}|}{|\nodes{G_{o,r}}|}.$$
  Since $v\in\mathbb{N}^5$ cannot be the vector type of both a half vertex, and a full vertex in the same tiling, we have also $\liminf_{r\to\infty} f'_{o,r}(v)>0$ for every $v\in\setvecctype$.
  Moreover $$\lim_{r\to\infty} \sum_{v\in\setvecctype} v \times f'_{o,r}(v) = \v1 .$$

  Let $u\in \mathbb{R}^5$ and $v^+\in \setvecctype$ such that $u\cdot \v1=0$ and $u\cdot v^+>0$.
  Suppose for the sake of contradiction that for every $v'\in \setvecctype$, $u\cdot v'\ge 0$.
  Then $\lim_{r\to\infty} \sum_{v\in\setvecctype} (u\cdot v) \times f'_{o,r}(v)=0$.
  We have a contradiction since $\lim_{r\to\infty} \sum_{v\in\setvecctype} (u\cdot v) \times f'_{o,r}(v) \ge (u\cdot v^+)\times \liminf_{r\to\infty} f'_{o,r}(v^+)>0$.
\end{proof}

By Proposition \ref{prop:good}, if $\tiling$ has positive density, then $\setvecctype$ is good. %

\def\span{\operatorname{span}}
\def\compat{\operatorname{Compat}} 
\paragraph{Compatible vectors.} %
Let $\span(V)$ be the set of vectors which are linear combinaisons of vectors in $V$.
Let $$\compat(V)=\{ w \in \mathbb{N}^5 : (w,2) \in \span(\{(1,1,1,1,1,3)\} \cup \{ (v,2) : v\in V \}) \}.$$
Note that if $\mX$ is good, then $\compat(\mX)$ is also good (but the converse is not necessarily true).

Given a subset $\mX\subseteq \mathbb{N}^5$, we define by $\poly_\mX$ the subset of $\mathbb{R}^5$ such that $\alpha=(\alpha_1,\ldots \alpha_5) \in \poly_\mX$ if and only if for every $i\in\{1,\ldots, 5\}$, $0 \le \alpha_i \le 1$, $\sum_{i=1}^5 \alpha_i = 3$ and for every $v\in \mX$, $\alpha\cdot v=2$.
One has $\poly_\mX = \poly_{\compat(\mX)}$.

If $\tiling$ is a tiling by a convex pentagon $\penta$ of angles $(\alpha_1\cdot \pi,\ldots \alpha_5\cdot \pi)$, then $({\alpha_1},\ldots {\alpha_5})\in \poly_{\setvecctype(T)} \cap ]0,1[^5$.
Moreover, if $\poly_\mX \cap ]0,1[^5 \ne \emptyset$, then the set $\compat(\mX)$ is finite.

In the next section, we compute all good sets $\mX$ with $\poly_\mX \cap ]0,1[^5 \ne \emptyset$. We show in particular that there are finitely many such sets.
    
\section{Computation of all good subsets}\label{sec:371}

\def\genset{\mB}

We say that the permutation in $S_5$ is a \emph{rotation/mirror} if it can be generated by the permutations $(12345)$ and $(3)(24)(15)$.
Given a permutation $p\in S_5$ and a vector $v\in \mathbb{N}^5$, let $p(v)$ be the vector $(v_{p(1)},\ldots, v_{p(5)})$. Let $p(V)$ for $V\subseteq \mathbb{N}^5$, be $\{ p(v) : v\in V \}$.
In this section, we show the following:
\begin{lemma}\label{lem:371}
  If $\mX$ is a non empty good set such that $\poly_{\mX}\cap ]0,1[^5\ne \emptyset$ then $\poly_{\mX} = \poly_{r(\compat(\genset_i))}$ for an integer $i\in \{1,\dots 371\}$ and a rotation/mirror $r$. ($\genset_i$ is given in Tables \ref{tab:371}.)
\end{lemma}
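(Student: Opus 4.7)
The plan is a computer-verified exhaustive backtracking over candidate sets $\mX\subset\mathbb{N}^5$, pruning by two linear-programming tests at every node and deduplicating by the dihedral symmetry group.

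First I would use the remark immediately preceding the lemma: whenever $\poly_\mX\cap\,]0,1[^5$ is non-empty, $\compat(\mX)$ is finite. This, together with the bound $v_i\le 2/\min_j\alpha_j$ which holds for every $v\in\mathbb{N}^5$ with $\alpha\cdot v=2$ at a fixed $\alpha\in\poly_\mX\cap\,]0,1[^5$, gives an explicit a~priori cap on the coordinates of vectors that may usefully be added at a given node, so the branching is always bounded. Moreover, adding a vector $v\notin\compat(\mX)$ imposes an affine constraint not in the span of the current ones, hence strictly drops the affine dimension of $\poly_\mX$; since that dimension is at most $4$, the search tree has depth at most $4$.

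Second, at each node I prune by two LP-feasibility tests. (a) $\poly_\mX\cap\,]0,1[^5\ne\emptyset$: a standard LP in the five variables $\alpha_i$ with rational coefficients. (b) Goodness of $\mX$: by LP duality the condition in the definition of ``good'' is equivalent to $\v1$ lying in the relative interior of the convex cone generated by $\mX$, i.e.\ to the feasibility of $\sum_{v\in\mX}c_v\,v=\v1$ with $c_v>0$. Both tests are carried out in exact rational arithmetic. A node is declared a \emph{leaf} when every admissible extension either leaves $\compat(\mX)$ unchanged or violates~(a); at a leaf, the set is recorded iff (b) holds.

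Third, I canonicalize under the dihedral group $D_5=\langle(12345),(3)(24)(15)\rangle$ of order~$10$, replacing each $\mX$ with the lex-minimum element of its $D_5$-orbit, which guarantees that every orbit is visited exactly once. The sets $\genset_i$ listed in Table~\ref{tab:371} are the canonical representatives produced by this procedure; the identity $\poly_\mX=\poly_{r(\compat(\genset_i))}$ in the conclusion just records that $\poly$ depends only on $\compat$ and that each good $\mX$ matches a canonical representative $\genset_i$ via a permutation~$r$.

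The main obstacle is certifying completeness and correctness of the search: one must verify that the canonicalization hits every $D_5$-orbit exactly once, that the LP decisions are exact so that feasibility is never spuriously rejected, and that the branching enumerates every $v\notin\compat(\mX)$ leading to a new $\compat$-class. The number $371$ has no closed-form explanation; it is simply the size of the list of leaves produced and is read off from the machine output tabulated in Table~\ref{tab:371}.
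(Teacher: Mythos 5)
Your overall architecture (a recursion over $\compat$-closed sets with exact LP tests, depth bounded by dimension drop, and a final reduction modulo rotations/mirrors) is the same as the paper's, but the step on which everything hinges --- finiteness of the branching at each node --- has a genuine gap, and it is exactly where the paper's key idea lives. A vector $w$ that must be added to reach a larger good set $\mY\supsetneq\mX$ satisfies $\alpha'\cdot w=2$ only for points $\alpha'\in\poly_\mY\subseteq\poly_\mX$, and these need not include your fixed interior point $\alpha$; so the cap $w_i\le 2/\min_j\alpha_j$ taken at a fixed $\alpha$ does not apply to the vectors you must branch on (and if you restrict the branching to those $w$ with $\alpha\cdot w=2$ at the fixed $\alpha$, the enumeration is incomplete, since most of the 371 classes have polytopes of lower dimension that exclude any given interior point). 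The only a priori constraint available is $w\cdot m\le 2$, where $m$ is the coordinatewise minimum of $\alpha$ over the closed polytope, and this admits infinitely many $w$ as soon as some coordinate of $m$ vanishes: already at the root $\mX=\emptyset$, every $w=(0,0,0,0,k)$ with $k\ge 3$ passes your test (a), taking $\alpha_5=2/k$. The paper's resolution is (i) to sort the angles decreasingly, which forces $(m_\mX)_1,(m_\mX)_2,(m_\mX)_3\ge\tfrac13>0$, and (ii) to choose $u$ with $u\cdot\v1=0$, $u\cdot v=0$ for all $v\in\mX$, and $u_i<0$ for each $i\in\{4,5\}$ with $(m_\mX)_i=0$; goodness of the \emph{unknown target} $\mY$ then guarantees some $w\in\mY\setminus\mX$ with $w\cdot u\ge 0$, and the two conditions $w\cdot u\ge 0$, $w\cdot m_\mX\le 2$ together bound all five coordinates of $w$. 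Nothing in your proposal plays the role of $u$ (you use goodness only as a test on the current node, never as a constraint on the supersets being searched), so the claim that ``the branching is always bounded'' is unsupported, and with it the termination and completeness of the whole search; even your leaf criterion is not effectively checkable, since it quantifies over the same unbounded set of extensions.

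Two smaller points. Recording only at leaves is also incorrect: a maximal good set whose polytope has positive dimension (e.g.\ $\compat(\genset_2)$) has good proper supersets (e.g.\ $\compat(\genset_4)$), hence is not a leaf under your criterion and would never be recorded, whereas the paper records every good set encountered at any node. On the other hand, your dual reformulation of goodness (that $\v1$ be a strictly positive combination of the elements of $\mX$) is correct for the nonzero nonnegative vectors arising here and is a legitimate way to implement that test, and the depth bound via strict dimension increase of the span agrees with the paper.
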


{
\begin{subtables}
  \label{tab:371}
  
  \begin{table}
    \scriptsize
    \begin{center}
      \begin{tabular}{|l|l||l|l|}
\hline
 i & $\genset_i$  &  i & $\genset_i$ \\\hline
\hline
\sout{1} &11100 &
{2} &11010 \\
\hline
\end{tabular}

    \end{center}
    \caption{$\dim(\poly)=3$. Striked out numbers correspond to families of pentagons of Type 1.}
  \end{table}

  \begin{table}
    \scriptsize
    \begin{center}
      \begin{tabular}{|l|l||l|l||l|l|}
\hline
 i & $\genset_i$  &  i & $\genset_i$  &  i & $\genset_i$ \\\hline
\hline
\sout{3} &00003 11100 &
{4} &00003 11010 &
\sout{5} &11100 00004 \\
{6} &11010 00004 &
\sout{7} &00012 00111 &
{8} &00012 11010 \\
{9} &00012 01011 &
\sout{10} &00012 01110 &
{11} &00012 01200 \\
{12} &00012 21000 &
{13} &00012 10110 &
{14} &00012 02100 \\
{15} &00012 20100 &
\sout{16} &00012 10011 &
{17} &00012 10200 \\
\sout{18} &00102 11100 &
\sout{19} &11010 11100 &
\sout{20} &11001 11100 \\
\sout{21} &01002 11100 &
\sout{22} &10110 11100 &
{23} &00102 01020 \\
{24} &00102 10110 &
{25} &00102 02010 &
{26} &00102 10101 \\
{27} &00102 10020 &
{28} &01011 11010 &
 & \\\hline
\end{tabular}

    \end{center}
    \caption{$\dim(\poly)=2$}
  \end{table}

  \begin{table}
    \tiny
    \begin{center}
      \begin{tabular}{|l|l||l|l||l|l|}
\hline
 i & $\genset_i$  &  i & $\genset_i$  &  i & $\genset_i$ \\\hline
\hline
\sout{29} &00003 00012 00102 &
\sout{30} &00003 00012 11001 &
{31} &00003 00012 01002 \\
{32} &00003 00012 01200 &
{33} &00003 00012 10101 &
{34} &00003 00012 02100 \\
{35} &00003 00012 20100 &
\sout{36} &00003 00102 11001 &
\sout{37} &00003 11010 11100 \\
\sout{38} &00003 01002 10101 &
\sout{39} &00003 10110 11100 &
\sout{40} &00003 01110 11100 \\
\sout{41} &00003 01200 11100 &
\sout{42} &00003 02100 11100 &
\sout{43} &00003 10200 11100 \\
{44} &00003 00102 01020 &
{45} &00003 00102 21000 &
{46} &00003 00102 02010 \\
{47} &00003 02010 20100 &
{48} &00003 02100 20010 &
{49} &00003 00120 21000 \\
{50} &00003 00120 10110 &
{51} &00003 00120 12000 &
{52} &00003 10110 11010 \\
{53} &00003 02010 11010 &
{54} &00003 10020 11010 &
{55} &00003 00210 12000 \\
{56} &00003 01200 20010 &
{57} &00003 01020 20100 &
{58} &00003 02010 10200 \\
\sout{59} &11001 11010 11100 &
\sout{60} &10101 10110 11100 &
\sout{61} &01200 11100 00004 \\
\sout{62} &02100 11100 00004 &
{63} &01020 11010 00004 &
{64} &02010 11010 00004 \\
\sout{65} &00111 01020 10200 &
\sout{66} &00111 01200 10020 &
{67} &00120 01011 12000 \\
\sout{68} &00120 10011 21000 &
{69} &00210 01101 12000 &
{70} &01011 02100 10020 \\
\sout{71} &01020 10011 20100 &
{72} &01101 02010 10200 &
{73} &00012 00102 01011 \\
\sout{74} &00012 00102 01020 &
{75} &00012 00102 21000 &
{76} &00012 00102 10020 \\
\sout{77} &00012 00102 10011 &
{78} &00012 00102 12000 &
\sout{79} &00012 00111 01002 \\
\sout{80} &00012 00111 01011 &
\sout{81} &00012 00111 02001 &
{82} &00012 00120 21000 \\
{83} &00012 00120 10002 &
{84} &00012 00120 12000 &
{85} &00012 01101 21000 \\
{86} &00012 02001 20100 &
{87} &00012 10101 12000 &
{88} &00012 01002 20100 \\
{89} &00012 02100 10002 &
{90} &00012 00201 01011 &
{91} &00012 00201 21000 \\
\sout{92} &00012 00201 10011 &
{93} &00012 00201 12000 &
\sout{94} &00012 01011 10011 \\
{95} &00012 10110 11010 &
\sout{96} &00012 01110 11010 &
{97} &00012 02100 11010 \\
{98} &00012 11010 20100 &
\sout{99} &00012 01002 10011 &
\sout{100} &00012 10200 11001 \\
\sout{101} &00012 01200 11001 &
{102} &00012 12000 20100 &
{103} &00012 01002 10200 \\
{104} &00012 01020 20100 &
{105} &00012 01020 10200 &
{106} &00012 01101 20100 \\
\sout{107} &00012 01110 10110 &
\sout{108} &00012 01110 10200 &
{109} &00012 02001 10200 \\
{110} &00012 02100 10020 &
\sout{111} &00111 01110 20100 &
{112} &00201 02010 10200 \\
\sout{113} &10011 10200 11010 &
\sout{114} &00102 02010 11100 &
\sout{115} &00102 11100 20010 \\
\sout{116} &01020 01101 11100 &
\sout{117} &00201 01020 11100 &
{118} &00201 02010 20100 \\
{119} &02001 10200 11010 &
{120} &00102 02001 10020 &
 & \\\hline
\end{tabular}

    \end{center}
    \caption{$\dim(\poly)=1$}
  \end{table}
  
  \begin{table}
    \tiny
    \begin{center}
      \begin{tabular}{|l|l||l|l||l|l|}
\hline
 i & $\genset_i$  &  i & $\genset_i$  &  i & $\genset_i$ \\\hline
\hline
\sout{121} &00003 00012 00102 01002 &
\sout{122} &00003 00012 00102 04000 &
\sout{123} &00003 00012 02100 11001 \\
\sout{124} &00003 00012 11001 20100 &
{125} &00003 00012 20100 21000 &
{126} &00003 00012 02100 12000 \\
{127} &00003 00012 02100 21000 &
{128} &00003 00012 12000 20100 &
{129} &00003 00012 01002 00400 \\
{130} &00003 00012 10101 21000 &
{131} &00003 00012 02100 00400 &
{132} &00003 00012 20100 00400 \\
{133} &00003 00012 21000 03100 &
{134} &00003 00012 12000 30100 &
{135} &00003 00012 01200 20100 \\
{136} &00003 00012 01200 04000 &
{137} &00003 00012 20100 01300 &
\sout{138} &00003 00102 02010 11001 \\
\sout{139} &00003 02010 02100 11010 &
\sout{140} &00003 00210 01002 10101 &
\sout{141} &00003 00210 01200 10110 \\
\sout{142} &00003 00210 01110 10200 &
\sout{143} &00003 01002 10101 20010 &
{144} &00003 00102 20010 21000 \\
\sout{145} &00003 00111 01200 20010 &
\sout{146} &00003 00111 02010 10200 &
{147} &00003 01011 02100 20010 \\
\sout{148} &00003 02010 10011 20100 &
{149} &00003 01002 20010 20100 &
{150} &00003 00210 01011 12000 \\
\sout{151} &00003 00210 10011 21000 &
{152} &00003 00210 02100 00031 &
{153} &00003 00210 20100 00031 \\
{154} &00003 02100 21000 00031 &
{155} &00003 12000 20100 00031 &
{156} &00003 12000 20010 00031 \\
{157} &00003 02010 21000 00031 &
{158} &00003 10200 20010 00031 &
{159} &00003 01200 02010 00031 \\
{160} &00003 01200 20100 00031 &
{161} &00003 01200 12000 00031 &
{162} &00003 02100 10200 00031 \\
{163} &00003 10200 21000 00031 &
{164} &00003 00102 02010 21000 &
{165} &00003 01101 02010 20100 \\
{166} &00003 02100 10101 20010 &
{167} &00003 00210 01002 20100 &
{168} &00003 00210 01101 21000 \\
{169} &00003 01002 10200 20010 &
\sout{170} &00003 01200 11001 20010 &
{171} &00003 00102 02010 00040 \\
{172} &00003 02010 20100 00040 &
{173} &00003 02100 20010 00040 &
{174} &00003 00210 01002 00040 \\
{175} &00003 00210 21000 00040 &
{176} &00003 00210 02100 00040 &
{177} &00003 00210 12000 00040 \\
{178} &00003 00210 20100 00040 &
{179} &00003 01002 20010 00040 &
{180} &00003 12000 20100 00040 \\
{181} &00003 02100 21000 00040 &
{182} &00003 01200 20010 00040 &
{183} &00003 01200 02010 00040 \\
{184} &00003 01200 20100 00040 &
{185} &00003 12000 20010 00040 &
{186} &00003 02010 10200 00040 \\
{187} &00003 02010 21000 00040 &
{188} &00003 10200 21000 00040 &
{189} &00003 10200 20010 00040 \\
{190} &00003 02010 20100 00400 &
{191} &00003 02100 20010 00400 &
{192} &00003 00210 21000 04000 \\
{193} &00003 01200 20010 04000 &
{194} &00003 00102 21000 03010 &
{195} &00003 02010 20100 01030 \\
{196} &00003 02100 20010 10030 &
{197} &00003 01002 20100 00310 &
{198} &00003 01101 02010 21000 \\
{199} &00003 10101 12000 20010 &
{200} &00003 00210 21000 00130 &
{201} &00003 00210 01101 20100 \\
{202} &00003 00210 02100 10101 &
{203} &00003 00210 12000 00130 &
\sout{204} &00003 10200 11001 20010 \\
\sout{205} &00003 01200 02010 11001 &
{206} &00003 01002 10200 30010 &
{207} &00003 01200 20010 10030 \\
{208} &00003 02010 10200 01030 &
{209} &00003 02100 20010 01021 &
{210} &00003 02010 20100 10021 \\
{211} &00003 01200 20010 00121 &
{212} &00003 00210 21000 10021 &
{213} &00003 01200 02010 20100 \\
{214} &00003 02100 10200 20010 &
{215} &00003 00210 02100 21000 &
{216} &00003 00210 12000 20100 \\
{217} &00003 01200 12000 20010 &
{218} &00003 02010 10200 21000 &
{219} &00003 02010 20100 01300 \\
{220} &00003 02100 20010 10300 &
{221} &00003 00210 21000 03100 &
{222} &00003 00210 12000 30100 \\
{223} &00003 01200 20010 13000 &
{224} &00003 02010 10200 31000 &
\sout{225} &00003 00111 01020 20100 \\
\sout{226} &00003 00111 01020 20010 &
\sout{227} &00003 00111 01200 20100 &
{228} &00003 01020 02100 20010 \\
{229} &00003 02010 10020 20100 &
{230} &00003 02100 20010 01030 &
{231} &00003 02010 20100 10030 \\
{232} &00003 00120 01020 10110 &
{233} &00003 00120 01011 21000 &
{234} &00003 00120 20010 00301 \\
{235} &00003 00120 01011 20010 &
{236} &00003 00120 21000 00310 &
\sout{237} &00003 00120 02010 10011 \\
{238} &00003 00120 02010 21000 &
{239} &00003 00120 12000 20010 &
{240} &00003 00120 21000 00400 \\
{241} &00003 00120 02010 00400 &
{242} &00003 00120 20010 00400 &
{243} &00003 00120 21000 03010 \\
{244} &00003 00120 12000 30010 &
{245} &00003 00120 01200 20010 &
{246} &00003 00120 01200 10101 \\
{247} &00003 00120 01200 04000 &
{248} &00003 00120 20010 01300 &
{249} &00003 00120 10200 03001 \\
\sout{250} &00003 10011 12000 20100 &
{251} &00003 12000 20010 00130 &
{252} &00003 01200 20010 00130 \\
{253} &00003 00210 01020 20100 &
{254} &00003 00210 01020 04000 &
{255} &00003 00210 20100 01030 \\
{256} &00003 01020 20100 00211 &
{257} &00003 01020 20010 00301 &
{258} &00003 10020 20100 00301 \\
{259} &00003 02010 10020 00301 &
{260} &00003 01020 20100 00310 &
{261} &00003 01020 20100 04000 \\
{262} &00003 01020 02100 00400 &
{263} &00003 10020 20100 00400 &
\sout{264} &02001 02010 02100 11001 \\
\sout{265} &00201 00210 01200 10101 &
\sout{266} &00021 02010 11001 20100 &
\sout{267} &00021 02100 11001 20010 \\
{268} &00021 00210 10101 21000 &
{269} &00021 00210 01101 12000 &
{270} &00021 00210 02100 00004 \\
{271} &00021 00210 20100 00004 &
{272} &00021 02100 21000 00004 &
{273} &00021 12000 20100 00004 \\
{274} &00021 01200 10101 20010 &
{275} &00021 01101 02010 10200 &
{276} &00021 01200 02010 00004 \\
{277} &00021 01200 20100 00004 &
{278} &00021 01200 12000 00004 &
{279} &00021 02010 21000 00004 \\
{280} &00021 10200 21000 00004 &
{281} &00021 12000 20010 00004 &
{282} &00021 02100 10200 00004 \\
{283} &00021 10200 20010 00004 &
\sout{284} &00201 02010 11001 20100 &
\sout{285} &00201 02100 11001 20010 \\
{286} &00210 02001 10101 21000 &
{287} &01200 02001 10101 20010 &
{288} &00210 02001 20100 00004 \\
{289} &00201 02010 21000 00004 &
{290} &00201 12000 20010 00004 &
{291} &02001 10200 20010 00004 \\
\sout{292} &02100 10200 11001 20010 &
\sout{293} &01200 02010 11001 20100 &
{294} &01200 10101 12000 20010 \\
{295} &01101 02010 10200 21000 &
{296} &00210 02100 10101 21000 &
{297} &00210 01101 12000 20100 \\
\sout{298} &00111 01020 10200 20010 &
\sout{299} &00111 01200 10020 20100 &
\sout{300} &00120 00201 10011 21000 \\
{301} &00120 00201 02010 00004 &
{302} &00120 00201 20010 00004 &
{303} &00120 02001 20010 00004 \\
{304} &00120 01011 12000 20010 &
\sout{305} &00120 02010 10011 21000 &
{306} &00120 01200 10101 20010 \\
{307} &00120 01200 02001 00004 &
\sout{308} &01020 02001 10011 20100 &
{309} &02001 10020 20100 00004 \\
{310} &01200 02001 20100 00004 &
{311} &00201 12000 20100 00004 &
{312} &00201 10020 20100 00004 \\
\sout{313} &00210 01020 10011 20100 &
\sout{314} &00021 00111 01020 20010 &
\sout{315} &00021 00111 02010 10020 \\
\sout{316} &00021 00120 01011 20010 &
{317} &00021 00210 01011 10020 &
\sout{318} &00021 00210 01110 20100 \\
{319} &00021 00210 02100 10110 &
{320} &00021 01200 02010 10200 &
\sout{321} &00120 00201 01110 20010 \\
{322} &00120 00201 02010 10110 &
\sout{323} &01020 01110 02001 20010 &
{324} &00021 01200 02010 20100 \\
{325} &00021 02100 10200 20010 &
{326} &00021 00210 02100 21000 &
{327} &00021 00210 12000 20100 \\
{328} &00021 01200 12000 20010 &
{329} &00021 02010 10200 21000 &
{330} &00201 01020 02100 20010 \\
\hline
\end{tabular}

    \end{center}
    \caption{$\dim(\poly)=0$ (part 1/2)}
    
  \end{table}
  
  \begin{table}
    \tiny
    \begin{center}
      \begin{tabular}{|l|l||l|l||l|l|}
\hline
 i & $\genset_i$  &  i & $\genset_i$  &  i & $\genset_i$ \\\hline
\hline
{331} &00201 02010 10020 20100 &
{332} &00120 01200 02001 20010 &
{333} &00210 02001 10020 21000 \\
{334} &00120 00201 02010 21000 &
{335} &00210 01020 02001 20100 &
{336} &00012 00120 21000 00301 \\
{337} &00012 00120 12000 00301 &
{338} &00012 00120 21000 03001 &
{339} &00012 00120 12000 30001 \\
{340} &00012 00120 01200 13000 &
{341} &00012 00120 02001 10300 &
{342} &00012 00120 10200 03100 \\
{343} &00012 02001 20100 01030 &
{344} &00012 02001 20100 01300 &
{345} &00012 00201 21000 00130 \\
{346} &00012 00201 12000 00130 &
{347} &00012 02001 21000 00130 &
{348} &00012 10200 21000 00130 \\
{349} &00012 00201 12000 30100 &
{350} &00012 02100 10200 01120 &
{351} &00012 12000 20100 01300 \\
{352} &00012 00201 01020 13000 &
{353} &00012 00201 20100 01030 &
{354} &00012 00201 02100 10030 \\
{355} &00012 00201 10020 03010 &
{356} &00012 01020 20100 00211 &
{357} &00012 01020 20100 00301 \\
{358} &00012 02100 10020 00301 &
{359} &00012 02001 10020 00310 &
{360} &00012 12000 20100 01030 \\
{361} &00012 01020 12000 01300 &
{362} &00012 01020 10200 03001 &
{363} &00012 01020 10200 30001 \\
{364} &00012 02001 10200 01030 &
{365} &00012 02001 10200 31000 &
{366} &00012 10020 20100 01300 \\
{367} &01020 02001 20100 00013 &
{368} &02001 10020 20100 00013 &
{369} &01020 02001 10200 00013 \\
{370} &01020 10200 20010 00013 &
{371} &01020 02001 10200 30010 &
 & \\\hline
\end{tabular}

    \end{center}
    \caption{$\dim(\poly)=0$ (part 2/2)}
  \end{table}

\end{subtables}

}

\def\polydec{\mathfrak{P}^\ge}
\def\mfQ{\mathfrak{Q}}

The remaining of this section is devoted to the proof of Lemma \ref{lem:371}, which is algorithmic.
In this section, the order of the angles is not important, so we suppose w.l.o.g. that there is an $\alpha=(\alpha_1,\ldots ,\alpha_5)\in \poly_\mX$ such that $1 > \alpha_1 \ge \alpha_2 \ge \alpha_3 \ge \alpha_4 \ge \alpha_5 > 0$.

Let $\polydec_\mX$ be the set of vectors $(\alpha_1,\ldots \alpha_5)$ such that $1 \ge \alpha_1 \ge \alpha_2 \ge \alpha_3 \ge \alpha_4 \ge \alpha_5 \ge 0$, $\sum_i \alpha_i=3$ and for every $v\in \mX$, $v \cdot \alpha = 2$. Clearly, $\polydec_\mX$ is a convex polytope. Moreover, one has $\polydec_\mX = \poly_\mX \cap \polydec_\emptyset$ and $\polydec_\mX = \polydec_{\compat(\mX)}$.
If $\polydec_\mX$ is non empty, let $m_\mX \in [0,1]^5$ be such that $(m_\mX)_i = \min \{ \alpha_i : \alpha \in \polydec_\mX\}$. Note that $(m_\mX)_1\ge \frac{3}{5}$, $(m_\mX)_2\ge \frac{1}{2}$, $(m_\mX)_3\ge \frac{1}{3}$, and $(m_\mX)_i\ge (m_\mX)_{i+1}$ for every $i\in\{1,\ldots 4\}$.

\def\namerecproc{Recurse}

We say that a set $\mX$ is \emph{maximal} if $\mX=\compat(\mX)$. To prove Lemma \ref{lem:371}, it suffices to prove it for every maximal good set.

The procedure \textsc{\namerecproc} (Algorithm \ref{algo:rec}) computes all maximal good sets $\mY\supseteq \mX$ with  $\polydec_{\mY} \cap ]0,1[^5 \ne \emptyset$.

\begin{algorithm}
  \caption{Exhaustive search of goods sets $\mY\supseteq \mX$}
  \label{algo:rec}
  \begin{algorithmic}[1]
\Procedure{\namerecproc}{$\mX$}
\State{$\mX \gets \compat(\mX)$}\label{line:compat}
\IIf{$\polydec_\mX \cap ]0,1[^5 = \emptyset$}\label{line:empty} \Return \EndIIf
\If{$\mX$ is good}
\State{Add $\mX$ to the list of good sets}
\EndIf
\State{Let $u\in\mathbb{R}^5$ such that: \label{line:choiceu} \begin{itemize} \item $u\cdot \v1=0$ \item $\forall v\in \mX$, $u\cdot v = 0$ and \item $\forall i\in \{4, 5\}$, $({m_\mX})_i=0 \Rightarrow u_i<0$\end{itemize}}
\State{$V\gets \{v\in \mathbb{N}^5 : v\cdot u\ge 0 \text{ and } v\cdot {m_\mX} \le  2\}$ \label{line:vm0}}
\For{every $w\in V\setminus \mX$} \State \Call{\namerecproc}{$\mX \cup \{w\}$} \label{line:rec} \EndFor
\EndProcedure
\end{algorithmic}
\end{algorithm}

For all maximal good set $\mY\supseteq \mX$, one has $\compat(\mX)\subseteq \mY$ (line \ref{line:compat}).
For $u$ defined line \ref{line:choiceu}, since $u\cdot \v1 =0$, for every $v\in\mX$, $v\cdot u=0$, and by definition of good subsets, we know that
if there is another maximal good set $\mY \supsetneq \mX$, then there is a $w\in \mathbb{N}^5\setminus \mX$ such that $w\cdot u\ge 0$. Moreover, we must have $w\cdot m_\mX \le 2$, otherwise $\poly_\mY\cap ]0,1[^5$ would be empty. Thus, $w$ is in the set $V$ computed line \ref{line:vm0}. We try every possibility for $w$ at line \ref{line:rec}.
An important point of the algorithm is that $V$ is finite: if $v\in V$, then for every $i$ such that $(m_\mX)_i>0$, $v_i$ is bounded by $\frac{2}{m_\mX}$, and thus for every $i$ with $(m_\mX)_i=0$, $v_i$ is bounded by $-\frac{1}{u_i} \sum_{j:(m_\mX)_j>0} (v_j\max(0,u_j))$.

The computation of a $u$ (line \ref{line:choiceu}) with the required property is done using a linear program. 
If no such $u$ exists, then the algorithm would fail. But even it is not necessary for the proof (it suffices that one possible execution terminates), one can show that this case never happens.

\begin{proposition}
  In line \ref{line:choiceu}, such a $u$  always exists.
\end{proposition}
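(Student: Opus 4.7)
The plan is to recast the existence of $u$ as a linear feasibility problem and dispatch it with a theorem of the alternative. Let $W = \{u\in \mathbb{R}^5 : u\cdot \v1=0 \text{ and } u\cdot v = 0\ \forall v\in \mX\}$, so that $W^\perp = \span(\{\v1\}\cup \mX)$, and let $I = \{i\in\{4,5\} : (m_\mX)_i = 0\}$. I want to show that $\{u\in W : u_i<0\ \forall i\in I\}$ is nonempty.

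First, I invoke Motzkin's transposition theorem (or equivalently Gordan's theorem applied to a basis of $W$): such a $u$ fails to exist if and only if there exist reals $z_i\ge 0$ for $i\in I$, not all zero, with $\mu := \sum_{i\in I} z_i e_i \in W^\perp$. So I assume for contradiction that such a $\mu$ exists.

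Next, the crucial observation is that the linear functional $\alpha\mapsto \mu\cdot\alpha$ is constant on $\polydec_\mX$. Indeed, expressing $\mu$ as a linear combination of $\v1$ and of the elements of $\mX$, and using $\v1\cdot\alpha = 3$ together with $v\cdot\alpha = 2$ for every $v\in\mX$ and every $\alpha\in\polydec_\mX$, the value $\mu\cdot\alpha = \sum_{i\in I} z_i\alpha_i$ equals some constant $C$ independent of $\alpha$.

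To finish, I evaluate $C$ at two carefully chosen witnesses. Since the algorithm reached line \ref{line:choiceu} without returning at line \ref{line:empty}, we have $\polydec_\mX\cap\,]0,1[^5\ne\emptyset$, so there is a $\beta$ with all $\beta_i>0$, yielding $C = \sum_{i\in I} z_i\beta_i > 0$. On the other hand, let $i^* = \min I$; the definition of $(m_\mX)_{i^*}=0$ provides $\alpha^*\in\polydec_\mX$ with $\alpha^*_{i^*}=0$, and the monotonicity $\alpha^*_1\ge\cdots\ge\alpha^*_5\ge 0$ built into $\polydec$ forces $\alpha^*_j = 0$ for every $j\ge i^*$, hence for every $j\in I\subseteq\{i^*,\ldots,5\}$, so $C=0$, a contradiction. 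The one subtlety I expect is picking the right theorem of the alternative: the strict inequalities $u_i<0$ push us from plain Farkas to Motzkin (or Gordan) rather than letting us use a single-sided version; after that, the monotonicity built into $\polydec$ is exactly what lets a single witness $\alpha^*$ zero out every coordinate of $\mu$ at once, which makes the contradiction immediate.
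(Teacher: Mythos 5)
Your argument is correct, but it reaches the conclusion by a genuinely different route than the paper. The paper's proof is a one-line construction: since the test at line \ref{line:empty} guarantees some $\alpha'\in\polydec_\mX\cap\, ]0,1[^5$, and since $(m_\mX)_i=0$ together with the ordering $\alpha_4\ge\alpha_5\ge 0$ built into $\polydec_\mX$ gives a single point $\alpha\in\polydec_\mX$ vanishing on every coordinate in question (if $\alpha_4=0$ then automatically $\alpha_5=0$), the vector $u=\alpha-\alpha'$ already does the job: $u\cdot\v1=3-3=0$, $u\cdot v=2-2=0$ for every $v\in\mX$, and $u_i=-\alpha'_i<0$ whenever $(m_\mX)_i=0$ (with $u=0$ when $(m_\mX)_4,(m_\mX)_5>0$). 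You instead argue indirectly through a theorem of the alternative: infeasibility of the strict system would, by Gordan/Motzkin on the subspace $W$, yield $\mu=\sum_{i\in I}z_ie_i\in\span(\{\v1\}\cup\mX)$ with $z\ge 0$, $z\ne 0$; the functional $\alpha\mapsto\mu\cdot\alpha$ is then constant on $\polydec_\mX$, yet positive at the interior witness $\beta$ and zero at the boundary witness $\alpha^*$, a contradiction. Both proofs ultimately rest on the same two geometric inputs — the interior point supplied by line \ref{line:empty} and the monotonicity of $\polydec_\mX$ that lets one witness annihilate $\alpha_4$ and $\alpha_5$ simultaneously — but the paper's version is shorter and constructive, exhibiting exactly the kind of $u$ the linear program at line \ref{line:choiceu} is asked to compute, whereas yours is non-constructive, avoids the case analysis on which of $(m_\mX)_4,(m_\mX)_5$ vanish, and would extend unchanged to an arbitrary index set $I$.
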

\begin{proof}
Note that if $\alpha,\alpha'\in \poly_{\mX}$, then $(\alpha-\alpha')\cdot \v1 =0$ and for every $v\in \mX$, $(\alpha-\alpha')\cdot v =0$.
If $({m_\mX})_4>0$ and $({m_\mX})_5>0$, one can take $u=(0,0,0,0,0)$.
If $({m_\mX})_4>0$ and $({m_\mX})_5=0$, there is $\alpha\in \polydec_\mX$ such that $\alpha_5=0$. %
Otherwise $({m_\mX})_4=({m_\mX})_5=0$, and there is $\alpha\in \polydec_\mX$ such that $\alpha_4=\alpha_5=0$.
In all cases, $u=\alpha-\alpha'$, with $\alpha' \in \polydec_\mX \cap ]0,1[^5 \ne \emptyset$, has the desired properties.
\end{proof}

The procedure \textsc{\namerecproc} is non deterministic, and a good choice for $u$ can reduce the size of the research tree and the computation time.
But since the dimension of the subspace spanned by $\mX$ strictly increase at every recursive call, there is at most $5$ nested calls of \textsc{\namerecproc}, and this procedure always terminates.

\paragraph{Computation.}
In order to reduce the computation time, we also track vectors $w$ which are not in $\mY$.
Our implementation takes approximately 40 seconds to explore all the cases (1354 calls of {\sc Recurse}).
There are 193 non-empty maximal goods sets $\mX$ with $\poly^\ge_\mX\cap ]0,1[^5\ne\emptyset$, and (taking all permutations) 3495 sets with $\poly_\mX\cap ]0,1[^5\ne\emptyset$.
If we keep only one representative  for each class up to rotation/mirror, one has the 371 sets of Tables~\ref{tab:371}.

\section{Testing all 371 cases}\label{sec:phase3}

\def\azero{\emptyset}
\def\api{\pi}
\def\aunk{\operatorname{unknown}}

For each of the 371 cases, we do an exhaustive search by backtracking, to try to construct a tiling of an arbitrarily large region.
If this backtracking is finite, then we know that there is no pentagon with these angles condition which tiles the plane.

\medskip

Throughout this section one fix $\mX=\compat(\genset_i)$ for an $i\in\{1,\ldots 371\}$.
One supposes that $\penta$ is a pentagon which tiles the plane with a tiling $\tiling$ of positive density, such that $\compat(\setvecctype(\tiling))=\mX$. %
Let $\poly = \poly_\mX$.

Suppose that the vertices of $\penta$ are $s_i$ (with angle $\alpha_i\times \pi$) and the lengths of the sides are $\ell_i$, $i\in\{1,\ldots 5\}$, in clockwise order, and such that $\ell_1$ is the length between $s_1$ and $s_2$. Moreover, we suppose w.l.o.g. that $\sum_i \ell_i=1$.

\medskip

The backtracking is done on a pair of two data-structures:
a tiling graph which represents the geometric information we have for the part of the tiling,
and a linear program $Q$ with represent conditions we have on $\ell$.

\paragraph{Tiling graph.}
The \emph{tiling graph} is an embedded planar graph, with additional information: labels on angles, edge and faces.
(Note that this graph differs significantly from the graph defined in Section \ref{sec:positive}.)

Each vertex of the graph corresponds to a vertex of the tiling. (This mapping is not necessarily injective.)
Each angle has a type: either $1$, $2$, $3$, $4$, $5$, $\azero$, $\api$ or $\aunk$.
Each edge in the graph has also a type: either $1$, $2$, $3$, $4$ or $5$.
The planar graph has two types of faces. A face is either a \emph{normal} face or is a \emph{special} face.
Each edge is adjacent to one special face, and one normal face.

There is a bijection between the normal faces and the tiles of the tiling. %
Thus a normal face has degree 5, and the types of its angles and edges are either (in clockwise order) $1$,$2$,$3$,$4$,$5$ or $5$,$4$,$3$,$2$,$1$. Moreover the type of the edge between the angles $1$ and $2$ is $1$.

A special face corresponds either to the frontier between tiles or an unknown area of the plane.
Its angles are $\azero$, $\api$ or $\aunk$. An angle of type $\azero$ (resp. $\pi$) corresponds to an angle of $0$ (resp. $\pi$) in the tiling.
A special face is \emph{complete} if it has no $\aunk$ angles.
A complete special face has exactly two $\azero$ angles. In this case, it corresponds to a segment which is a frontier between two or more tiles.

A vertex $v$ is \emph{complete} if there is no $\aunk$~angle adjacent to it.
Similarly to Section \ref{sec:positive}, let $\cvect(v)\in \mathbb{N}^5$ be such that, for every $i\in\{1,\ldots 5\}$, $(\cvect(v))_i=c\times (\vect(v))_i$, where $(\vect(v))_i$ is the number of angles $i$ adjacent to $v$, and $c=1$ if there is no $\api$ adjacent to $v$ (\emph{i.e.} $v$ is full) and $c=2$ if there is one angle $\api$ adjacent to $v$ (\emph{i.e.} $v$ is half).

An example of a tiling graph is given in Figure \ref{fig:tgraph}. The tiling graph on the right corresponds to the tiling on the left. Note that other tiling graphs are possible to represent the same tiling.

\tikzset{
    position/.style args={#1:#2 from #3}{
        at=(#3.#1+45), anchor=#1+180+45, shift=(#1+45:#2)
    }
}
\newdimen\nodeDist
\nodeDist=.5mm

\begin{figure}
\begin{minipage}{.3\linewidth}
\begin{tikzpicture}[scale=1., rotate=45]
\draw (-0.585253,-0.214217) -- (0.000000,0.799471) ;
\draw (-0.428436,-0.799471) -- (-0.585253,-0.214217) ;
\draw (0.428435,0.057399) -- (-0.428436,-0.799471) ;
\draw (0.585253,0.642652) -- (0.428435,0.057399) ;
\draw (0.000000,0.799471) -- (0.585253,0.642652) ;

\draw (-0.428436,1.227905) -- (0.000000,0.799471) ;
\draw (-0.856871,0.799471) -- (-0.428436,1.227905) ;
\draw (-1.170506,-0.371036) -- (-0.856871,0.799471) ;
\draw (-0.585253,-0.214217) -- (-1.170506,-0.371036) ;
\draw (0.000000,0.799471) -- (-0.585253,-0.214217) ;

\draw (-0.428436,1.227905) -- (-0.856871,0.799471) ;
\draw (-0.856871,1.656340) -- (-0.428436,1.227905) ;
\draw (-2.027377,1.969977) -- (-0.856871,1.656340) ;
\draw (-1.870559,1.384724) -- (-2.027377,1.969977) ;
\draw (-0.856871,0.799471) -- (-1.870559,1.384724) ;

\draw (-1.598942,2.398412) -- (-0.428436,2.398412) ;
\draw (-2.027377,1.969977) -- (-1.598942,2.398412) ;
\draw (-0.856871,1.656340) -- (-2.027377,1.969977) ;
\draw (-0.271618,1.813159) -- (-0.856871,1.656340) ;
\draw (-0.428436,2.398412) -- (-0.271618,1.813159) ;

\draw (-0.271618,1.813159) -- (0.742070,1.227905) ;
\draw (-0.856871,1.656340) -- (-0.271618,1.813159) ;
\draw (0.000000,0.799471) -- (-0.856871,1.656340) ;
\draw (0.585253,0.642652) -- (0.000000,0.799471) ;
\draw (0.742070,1.227905) -- (0.585253,0.642652) ;
\end{tikzpicture}
\end{minipage}
\begin{minipage}{.7\linewidth}
\begin{tikzpicture}[scale=2.5,every node/.style={circle,inner sep=0pt,text width=4mm,align=center}, rotate=45]
\input{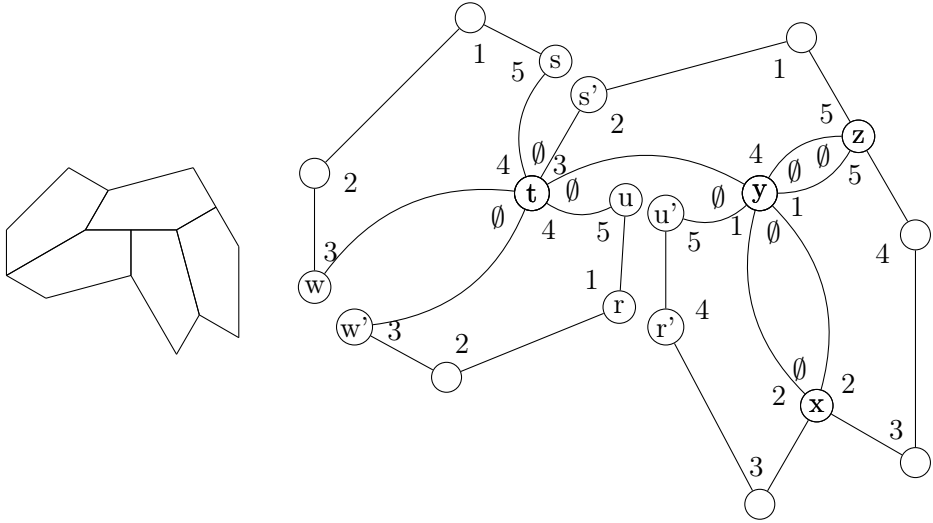}
\end{tikzpicture}
\end{minipage}
\caption{Example of a tiling graph (Type 15). Unmarked angles are labeled ``$\aunk$''.}
\label{fig:tgraph}
\end{figure}

A \emph{run} on a special face is a succession of consecutive $\azero$ and $\api$ angles. Each run corresponds to aligned points in the tiling.
For example, on Figure \ref{fig:tgraph}, $(s,t,s')$, $(u,t,y,u')$ are (maximal) runs. The complete face $(y,z)$ induces also a run.

Since we want to generate a tiling graph corresponding to a tiling by $\penta$, we keep the following conditions on the tiling graph (that is, we backtrack if one of these conditions are not fulfilled)
\begin{itemize}
  \item for every vertex $v$, there is a $w\in \mX$ such that $\cvect(v)\le w$,
  \item for every complete vertex $v$, $\cvect(v) \in \mX$,
  \item there is no run with more than two $\azero$ angles,
  \item there is no vertex with two $\api$ angles adjacent to it.
\end{itemize}
Note that every finite subset $\tiling'$ of $\tiling$ can ge represented by a tiling graph with the previous properties (but the representation is not unique).
We will make some ``completion'' operations on it, which guarantee that the new tiling graph is also a tiling graph of the same tile set.

Moreover, during the exploration, all the operations we do on the tiling graph keep the additional following conditions: the graph is connected, has exactly one non-complete face, and has no vertices with most than one $\aunk$~angles adjacent to it.

\paragraph{Completing vertices.}
At every time, as soon as there is a non-complete vertex $v$ such that $\cvect(v)\in \setvecctype$, we relabel the angle labeled $\aunk$ adjacent to $v$ with the label $\azero$.
Moreover, for every non-complete vertex $v$ such that $2\times \vect(v)\in \setvecctype$, we relabel the angle labeled $\aunk$ adjacent to $v$ with the label $\api$.

\paragraph{Length suppositions and completing faces.}
If there is a pair of vertices $(v,v')$ on a same run, and the linear program $Q$ imply that $v$ and $v'$ are the same point in the tiling, then we merge $v$ and $v'$ in the graph.

If $Q$ does not permit to decide among the following 3 possibilities :
\begin{itemize}
\item $v$ and $v'$ are the same point in the tiling,
\item $v$ is on the right of $v'$ (with an arbitrary orientation of the line corresponding to the run),
\item $v$ is on the left of $v$,
\end{itemize}
then we branch on the 3 possibilities: we add the corresponding condition on $Q$, and recurse.

\begin{example}
In Figure \ref{fig:tgraph}, $(w,t,w')$ is a run, and the length between $t$ and $w$ (which is $\ell_3$) is the same as the distance between $t$ and $w'$. Thus we merge $w$ and $w'$, and we relabel the angle $t,w,t$ into $\azero$, and we create a new special complete face $(t,w)$.

We have also to consider the run $(u,t,y,u')$. We have either to choose if $u$ and $u'$ is the same point (that is, add the condition $\ell_3=\ell_4+\ell_5$) or not (and in this case, explore with the additional condition $\ell_3>\ell_4+\ell_5$).

Suppose now we explore the first case (merge of $u$ and $u'$). We create an angle $\api$ adjacent to $u$ to complete the special face $(u,t,y)$. Since $2\alpha_5=\pi$ in the type 15 (i=303), the vertex $u$ is now complete, and we can label the angle $r,u,r'$ as $\azero$. We have a new run $(r,u,r')$. Since we have already the condition $\ell_4=\ell_5$ in $Q$ (by the complete special face $y,z$), we know that $r$ and $r'$ must be the same vertex in the tiling, and we can merge $r$ and $r'$ without branching.
\end{example}

\paragraph{Existence of a solution which respects Q}

Let $s(\alpha)$ be the vector such that $s(\alpha)_i=(i-1) -\sum_{j=1}^{i-1} \alpha_i$.
Note that if $\alpha$ (resp. $\ell$) is the vector of angles (resp. lengths) of a pentagon, we have
\begin{equation}\label{eq:eq}
\sum_i \ell_i \exp(s(\alpha)_i \times \pi \times \sqrt{-1} )=0.
\end{equation}

Given a linear program $Q$, we denote by $\mfQ$ the set of $\ell\in\mathbb{R}^5$ such that $\ell\ge 0$, $\sum \ell=1$ and $\ell$ respects the conditions in $Q$.

If the following condition is not fulfilled, then no convex pentagon exists with the conditions, and one backtrack.
\begin{equation}\label{eq:nopoly}
  \exists \ell\in \mfQ \cap ]0,1[^5, \exists \alpha\in\poly \cap ]0,1[^5, \sum_i \ell_i \exp(s(\alpha)_i\times \pi \times \sqrt{-1})= 0
\end{equation}

If $\dim(\poly)=0$ (cases from 121 to 371), since all conditions for $\poly$ are rational, there is a $p\in\mathbb{Z}^5$ and $q\in \mathbb{N}^+$ such that $s(\alpha)={p}/{q}$, where $\{\alpha\}=\poly$, and thus the condition $\sum_i \ell_i \exp(s(\alpha)_i \times \pi \times \sqrt{-1})= 0$ can be turned into $\ell \cdot \cos =0$ and $\ell \cdot \sin =0$, where $\cos_i=\cos(p_i \times {\pi}/{q})$ and $\sin_i=\sin(p_i \times {\pi}/{q})$.
The one can decide with computations on an algebraic extension of $\mathbb{Q}$ (for example $\mathbb{Q}[\cos\left({\pi}/{q}\right)]$).

\medskip

If $\dim(\poly)>0$, the verification of (\ref{eq:nopoly}) is more complicated, and we do not try to verify it at each recursion.
However, if one has a certificate that (\ref{eq:nopoly}) is false, then we backtrack.
If there is a family of polytopes
$\poly_l$, $l\in \{1,\ldots,L\}$
such that $\poly\subseteq \bigcup_{l=1}^L \poly_l$, and for every $l\in \{1,\ldots,L\}$, $\{x\in \mfQ : x\cdot \sin^+ \ge 0, x\cdot \sin^- \le 0,x\cdot \cos^+ \ge 0 \text{ and } x\cdot \cos^- \le 0\} = \emptyset$, where $\sin^+_i$ (resp. $\sin^-_i$) is an upper (resp. lower) bound of $\{\sin(\pi s(\alpha)_i) : \alpha \in \poly_l\}$ (and similarly for $\cos^+$ and $\cos^-$), then we know that (\ref{eq:nopoly}) if false. This can be done using rational numbers.

This procedure cannot decide, for example, if (\ref{eq:nopoly}) is false, but (\ref{eq:eq}) has a degenerate solution $(\ell,\alpha)$ is on the boundary of $\mfQ \times \poly$.
To resolve these cases, we also backtrack in some degenerate solutions (types 20 to 24 in Table \ref{table:types}).

\paragraph{Branching.}
If we are not in any case of backtracking, then we add a new normal face to the tiling graph. 

We take a non-complete vertex $w$ in the graph. We know that, if the tiling graph corresponds to a sub-tiling $\tiling'$ of a tiling $\tiling$ by $\penta$, there is a tile $P\in \tiling \setminus \tiling'$ such that $w$ is a vertex of $P$, and $P$ shares a line segment with $\tiling'$.
Then we branch on on all theses possibilities of face addition.

\begin{table}
\scriptsize
  \begin{tabularx}{\textwidth}{|p{1.1cm}|X|p{2.2cm}||p{1.1cm}|X|p{2.2cm}|}
\hline
Type 1\newline(i=1)&
$a+b+c=2\pi$&
&
Type 2\newline(i=2)&
$a+b+d=2\pi$&
$C=E$\\\hline
Type 3\newline(i=31)&
$3 e=2\pi$\newline 
$d+2 e=2\pi$\newline 
$b+2 e=2\pi$&
$C+E=D$\newline 
$A=B$&
Type 4\newline(i=6)&
$a+b+d=2\pi$\newline 
$2 e=\pi$&
$D=E$\newline 
$B=C$\\\hline
Type 5\newline(i=4)&
$3 e=2\pi$\newline 
$a+b+d=2\pi$&
$D=E$\newline 
$B=C$&
Type 6\newline(i=13)&
$d+2 e=2\pi$\newline 
$a+c+d=2\pi$&
$C=D=E$\newline 
$A=B$\\\hline
Type 7\newline(i=17)&
$d+2 e=2\pi$\newline 
$a+2 c=2\pi$&
$A=C=D=E$&
Type 8\newline(i=14)&
$d+2 e=2\pi$\newline 
$2 b+c=2\pi$&
$A=B=C=D$\\\hline
Type 9\newline(i=15)&
$d+2 e=2\pi$\newline 
$2 a+c=2\pi$&
$A=B=C=D$&
Type 10\newline(i=69)&
$2 c+d=2\pi$\newline 
$b+c+e=2\pi$\newline 
$a+2 b=2\pi$&
$A+C=D=E$\\\hline
Type 11\newline(i=67)&
$c+2 d=2\pi$\newline 
$b+d+e=2\pi$\newline 
$a+2 b=2\pi$&
$A=B=C+2 E$&
Type 12\newline(i=67)&
$c+2 d=2\pi$\newline 
$b+d+e=2\pi$\newline 
$a+2 b=2\pi$&
$A+C=B=2 E$\\\hline
Type 13\newline(i=63)&
$b+2 d=2\pi$\newline 
$a+b+d=2\pi$\newline 
$2 e=\pi$&
$A=2 B=2 C$&
Type 14\newline(i=67)&
$c+2 d=2\pi$\newline 
$b+d+e=2\pi$\newline 
$a+2 b=2\pi$&
$A=B=2 C=2 E$\\\hline
Type 15\newline(i=303)&
$c+2 d=2\pi$\newline 
$2 b+e=2\pi$\newline 
$2 a+d=2\pi$\newline 
$2 e=\pi$&
$B=D=E$\newline 
$C=2 B$&
Type 16\newline(i=72)\newline $\subset$ T10&
$b+c+e=2\pi$\newline 
$2 b+d=2\pi$\newline 
$a+2 c=2\pi$&
$2 A=D=E$\newline 
$A=C$\\\hline
Type 17\newline(i=25)\newline $\subset$ T2&
$c+2 e=2\pi$\newline 
$2 b+d=2\pi$&
$A=B=C=D=E$&
Type 18\newline(i=73)\newline $\subset$ T2&
$d+2 e=2\pi$\newline 
$c+2 e=2\pi$\newline 
$b+d+e=2\pi$&
$D=E$\newline 
$A=B$\\\hline
Type 19\newline(i=23)\newline $\subset$ T1&
$c+2 e=2\pi$\newline 
$b+2 d=2\pi$&
$A=B=C=D$&
Type 20\newline(i=2)\newline degen.&
$a+b+d=2\pi$&
$A=C+D$\newline 
$B=E$\\\hline
Type 21\newline(i=12)\newline degen.&
$d+2 e=2\pi$\newline 
$2 a+b=2\pi$&
$A=B$\newline 
$C=D$&
Type 22\newline(i=27)\newline degen.&
$c+2 e=2\pi$\newline 
$a+2 d=2\pi$&
$A=B=C=E$\\\hline
Type 23\newline(i=64)\newline degen.&
$2 b+d=2\pi$\newline 
$a+b+d=2\pi$\newline 
$2 e=\pi$&
$A=2 C=2 D$&
Type 24\newline(i=69)\newline degen.&
$2 c+d=2\pi$\newline 
$b+c+e=2\pi$\newline 
$a+2 b=2\pi$&
$2 D=A+C$\newline 
$2 E=A+C$\\\hline

\end{tabularx}
  \caption{Conditions for tilings of types 1 to 24, with $(a,\ldots, e)=(\alpha_1,\ldots, \alpha_5)$ and $(A,\ldots, E)=(\ell_1,\ldots, \ell_5)$.}\label{table:types}
\end{table}

\paragraph{Results.}
If we also backtrack if we are in one the 24 types presented in Table \ref{table:types}, the exhaustive search terminates, for all 371 cases for angle conditions.
That is, if a convex pentagon $\penta$ tiles the plane, then $\penta$ is in one the 24 families.

Types 1 to 15 are the already known families of pentagons which tiles the plane. 
Types 16 to 19 are special cases of the 15 already known families, \emph{i.e.} every non-degenerate solution of (\ref{eq:nopoly}) is in a known family. 
Type 16 is a special case of Type 10: the only solution of (\ref{eq:nopoly}) has $\alpha_1=\alpha_4=\alpha_5=\frac{\pi}{2}$ and $\alpha_2=\alpha_3=\frac{3\pi}{4}$.
Types 17 and 18 are special cases of Type 2: in every solution, all lengths are equal and $\alpha_1+\alpha_3=\pi$.
Type 19 is a special case of Type 1: conditions imply that for every solution, $\alpha_3+\alpha_4=\pi$.
Finally, types 20 to 24 are degenerate, \emph{i.e.} (\ref{eq:nopoly}) has no solution. These observations can be done using a computer algebra system, turning linear conditions on angles, lengths and (\ref{eq:eq}) into a system of polynomial equations.

\begin{figure}
\begin{tikzpicture}[scale=.7]
\input{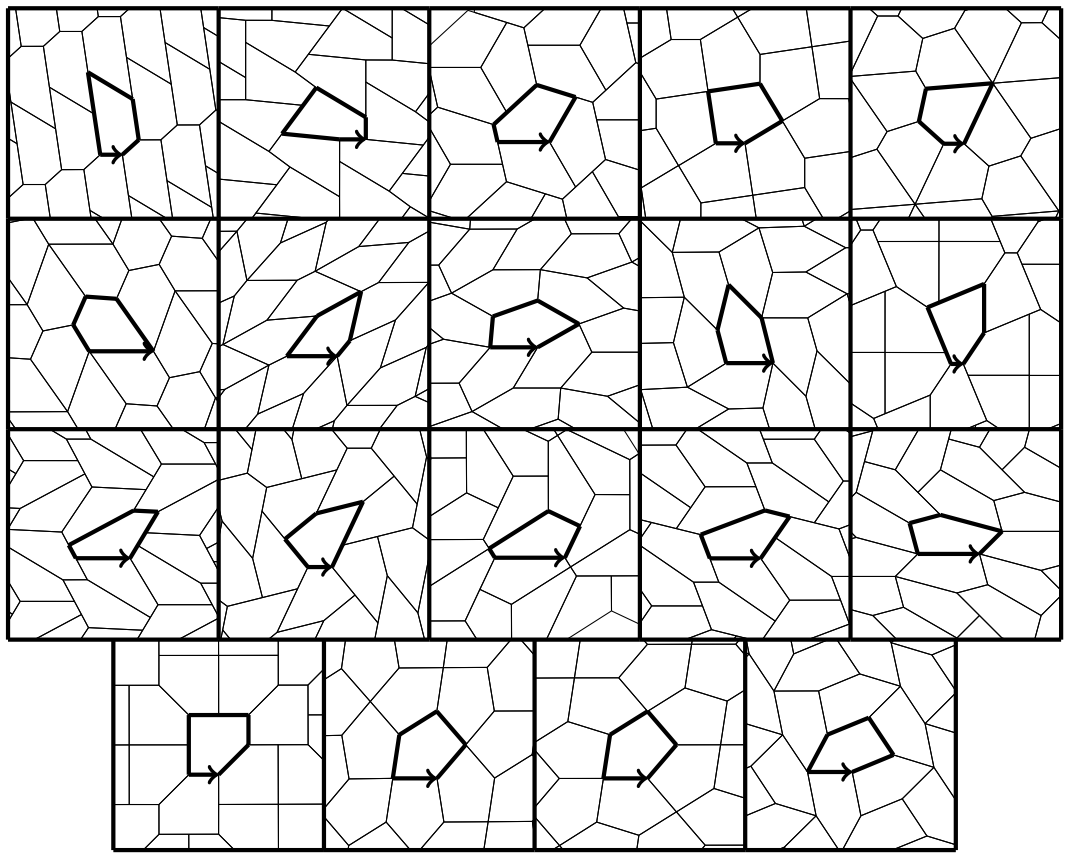}
\end{tikzpicture}
\caption{Tilings 1 to 19. The arrow is from vertex $s_1$ to $s_2$ in the bold tile.}
\end{figure}


\begin{thebibliography}{9}
\bibitem{Kershner}
  {\sc R.B. Kershner},
  On paving the plane,
  {\sl American Mathematical Monthly}, Vol. 75 (1968), pp. 839--844

\bibitem{last}
  {\sc   C.  Mann,  J.  McLoud-Mann,  D.  Von  Derau},
  Convex  pentagons  that  admit  $i$-block  transitive tilings,
%
{\sl Geometriae Dedicata}, to appear (2017)

\bibitem{Reinhardt} {\sl K. Reinhardt},
  Uber die Zerlegung der Ebene in Polygone,
  Ph.D. thesis, Universit\"at Frankfurt (1918)

\bibitem{Schattschneider}
{\sc   D. Schattschneider},
Tiling the plane with congruent pentagons,
{\sl Mathematics Magazine}, Vol. 51 (1978), pp. 29--44


\end{thebibliography}
\end{document}